\newtheorem{theorem}{Theorem}
\newtheorem{lemma}[theorem]{Lemma}
\newtheorem{conj}[theorem]{Conjecture}
\theoremstyle{definition}
\newtheorem*{definition}{Definition}
\title{An upper bound for the restricted online Ramsey number}
\author{David Gonzalez\thanks{Corresponding Author. Department of Mathematics, Stanford University, Stanford, CA 94305, USA. Email: dgonz7@stanford.edu}, Xiaoyu He\thanks{Department of Mathematics, Stanford University, Stanford, CA 94305, USA. Email: alkjash@stanford.edu. Research supported by a NSF GRFP grant number DGE-1656518.}, Hanzhi Zheng\thanks{ Department of Mathematics, Stanford University, Stanford, CA 94305, USA. Email: zhz108@stanford.edu}}
\begin{document}
\maketitle

\begin{abstract}
The restricted $(m,n;N)$-online Ramsey game is a game played between two players, Builder and Painter. The game starts with $N$ isolated vertices. Each turn Builder picks an edge to build and Painter chooses whether that edge is red or blue, and Builder aims to create a red $K_m$ or blue $K_n$ in as few turns as possible. The restricted online Ramsey number $\tilde{r}(m,n;N)$ is the minimum number of turns that Builder needs to guarantee her win in the restricted $(m,n;N)$-online Ramsey game. We show that if $N=r(n,n)$,
\[
\tilde{r}(n,n;N)\le \binom{N}{2} - \Omega(N\log N),
\]
motivated by a question posed by Conlon, Fox, Grinshpun and He. The equivalent game played on infinitely many vertices is called the online Ramsey game. As almost all known Builder strategies in the online Ramsey game end up reducing to the restricted setting, we expect further progress on the restricted online Ramsey game to have applications in the general case.

\end{abstract}

\section{Introduction}

Ramsey's theorem states that for any $m, n\ge 3$, there exists a least positive integer $N = r(m,n)$ such that any red-blue coloring of the edges of the complete graph $K_{N}$ contains either a red $m$-clique or blue $n$-clique. These particular integers $r(m,n)$ are the \textit{Ramsey numbers}. Determining the growth rate of the Ramsey numbers $r(m,n)$ is perhaps the central problem of Ramsey theory, and much is still unknown. An early result of Erd\H os and Szekeres guarantees that $r(n,n)\leq 2^{2n}$ \cite{ErSk}, and in the other direction Erd\H os proved that $r(n,n) \ge 2^{n/2}$ \cite{Erdos}. No exponential improvement has been made on either bound in the decades since they were proven.

This paper is concerned with a widely-studied variant of Ramsey numbers, called \textit{online Ramsey numbers}. First we define the \textit{online Ramsey game}, which is played between two players called Builder and Painter. 

Fix positive integers $m, n\ge 3$. The game takes place on an infinite set of isolated vertices. Each turn, Builder chooses two non-adjacent vertices and builds the edge between them. Painter then paints the edge either red or blue. Builder wins when a red $m$-clique or blue $n$-clique appears in the graph, and Painter's goal is to prevent Builder's win for as long as possible. The \textit{online Ramsey number} $\tilde{r}(m,n)$ is the smallest $t$ such that Builder has a strategy to win within $t$ turns regardless of how painter plays.

Online Ramsey numbers were first introduced by Beck \cite{beck} and independently by Kurek and Ruciński \cite{kr}. One can easily find an exponential bound on the online Ramsey number $\tilde{r}(m,n)$ using the classical Ramsey number as follows.
\begin{equation} \label{eq:trivial}
\frac{r(m,n)}{2} \leq \tilde{r}(m,n) \leq \binom{r(m,n)}{2}
\end{equation}

However, unlike the classical Ramsey numbers which have seen no exponential improvements in decades, both sides of (\ref{eq:trivial}) have been improved. Conlon \cite{Conlon} proved an exponential improvement on the upper bound, showing that for infinitely many $n$
\[\tilde{r}(n,n) \leq 1.001^{-n}\binom{r(n,n)}{2}.\]

In the other direction, Conlon, Fox, Grinshpun, and He \cite{xiaoyu} used the probabilistic method to prove an exponential improvement to the lower bound as well, showing for all $n\ge 3$,
\[\tilde{r}(n,n) \geq 2^{(2-\sqrt{2})n + O(1)}.\]

Fix positive integers $m, n$, and $N$. The $(m,n;N)$-\textit{restricted online Ramsey game} is the online Ramsey game played on a finite vertex set of size $N$. The \textit{restricted online Ramsey number} $\tilde{r}(m,n;N)$, defined in \cite{xiaoyu}, is the number of moves Builder must take to ensure victory in this game. Of course, this number is only defined when $N \geq r(m,n)$. Many of the Builder strategies used throughout \cite{xiaoyu} and \cite{Conlon} reduce the $(m,n)$-online Ramsey game to the $(m',n';r(m',n'))$-restricted game, for some $m' < m$ and $n' < n$, and then apply the trivial bound
\[
\tilde{r}(m',n';r(m',n')) \le \binom{r(m',n')}{2}.
\]
Improved bounds on the restricted online Ramsey number may allow for corresponding improvements in these Builder strategies, motivating our work on these problems.

Our main result on the restricted online Ramsey game is the following.
\begin{theorem}\label{thm:main}
If $n\ge 3$ and $N = r(n,n)$, then
\[
\tilde{r}(n,n;N)\leq{N \choose 2} - \Omega(N\log N).
\]
\end{theorem}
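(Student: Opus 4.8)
The plan is to exhibit an adaptive Builder strategy that forces a monochromatic $K_n$ while leaving $\Omega(N\log N)$ of the $\binom N2$ edges unbuilt. The first point to record is that Builder cannot commit to the skipped edges in advance: since the size Ramsey number of $K_n$ equals $\binom{r(n,n)}{2}$, every proper subgraph of $K_N$ has a colouring with no monochromatic $K_n$, so Painter could simply follow such a colouring. Thus the set of skipped edges must be chosen in response to Painter's colours, and the natural framework is an induction reducing the game to a smaller restricted game.

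Concretely, I would prove $\tilde r(m,n;r(m,n))\le \binom{r(m,n)}{2}-c\,r(m,n)\log r(m,n)$ by induction on $m+n$ (falling back to the trivial bound for a constant base case, which costs nothing because the discard at the level above already dominates). In the inductive step Builder picks a vertex $v$ and reveals its edges one at a time until its red-degree reaches $r(m-1,n)$ or its blue-degree reaches $r(m,n-1)$; in the "tight" case $r(m,n)=r(m-1,n)+r(m,n-1)$ this race terminates within $N-1$ reveals. Say the red degree wins, with red neighbourhood $R$ of size $r(m-1,n)$, and let $D$ be the remaining vertices. Builder plays the $(m-1,n)$-game on $R$ (a red $K_{m-1}$ in $R$ with $v$ is a red $K_m$; a blue $K_n$ in $R$ is a blue $K_n$) by the inductive hypothesis, and never touches $D$ again, so the unbuilt edges include everything inside $D$, everything between $D$ and $R$, and whatever the sub-game leaves unbuilt. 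Since $|D|=r(m,n-1)-1$ and $r(m-1,n)\ge \tfrac12 r(m,n)$, using convexity of $x\log x$ one checks that $\binom{|D|}2+|D|\,|R|+c\,r(m-1,n)\log r(m-1,n)$ already exceeds $c\,r(m,n)\log r(m,n)$, and in fact in this regime the savings are of order $N^2$ — far more than needed.

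The real obstacle is the "loose" case, where $r(m,n)$ is much smaller than $r(m-1,n)+r(m,n-1)$, so Painter can answer $v$'s edges keeping both monochromatic degrees below their thresholds; in the worst sub-case one monochromatic neighbourhood of $v$, say the red one $R$, falls exactly one short of $r(m-1,n)$ while the blue one is tiny, and then the direct reduction stalls. My plan is to have Builder borrow a few vertices from the blue side into $R$, build their (few) edges into $R$, and play the $(m-1,n)$-game on the augmented set of size $\ge r(m-1,n)$; the delicate issue is that a red $K_{m-1}$ produced this way may use a borrowed vertex $w$, and since $vw$ is blue it fails to extend to a red $K_m$ through $v$. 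The crux is to show that such a "stuck" clique can always be rerouted into a genuine monochromatic $K_n$ at a cost of only $O(1)$ extra built edges — exploiting that the red $K_{m-2}$ inside it is red to $v$, that the augmented clique genuinely arrows $(K_{m-1},K_n)$, and that Painter is still playing online on the unrevealed part — and then to propagate this through the $O(\log N)$ recursion levels while bookkeeping the savings. Making this rerouting argument work uniformly, with no information about the true relations among the Ramsey numbers $r(m,n)$, is the part I expect to be hardest and is where the $\Omega(N\log N)$ bound is actually won.
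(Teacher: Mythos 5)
Your plan is genuinely different from the paper's: the paper does no vertex-by-vertex recursion on Ramsey numbers at all. Instead Builder first constructs a complete $C$-partite graph $K_{(C\times N/C)}$ for a large constant $C$, and then analyses the colours one pair of parts at a time. If every pair of parts is heavily skewed toward one colour, a ``reduced graph'' version of Erd\H os--Szekeres locates $\Theta(\log C)$ parts spanning an almost-monochromatic piece, and the Erd\H os--Szemer\'edi theorem (sparse graphs have cliques or independent sets much larger than $\log N$) then forces a monochromatic $K_n$ after filling in only those $\Theta(\log C)$ parts, saving $\Omega(N^2)$ edges. If some pair of parts is $\varepsilon$-colour-balanced, a K\"ov\'ari--S\'os--Tur\'an argument applied to the vertex--pair incidence graph of that bipartite piece produces a $K_{a,b}$ with $a=\Omega(\log N)$ and $b=\Omega(N\log N)$, hence two families of pairwise \emph{independent} unbuilt pairs of sizes $\Omega(N\log N)$ each, and a short lemma shows Builder may leave one whole family unbuilt. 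Crucially, none of this depends on any relation between $r(m,n)$ and $r(m-1,n)+r(m,n-1)$; your recursion lives or dies on exactly that relation.

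The gap you flag in your own outline is, I believe, fatal rather than merely hard. In the loose case Painter can reveal all $N-1$ edges at $v$ keeping red degree at $r(m-1,n)-1$ and blue degree at $N-r(m-1,n)$, and nothing prevents $N-r(m-1,n)$ from being tiny (even $O(1)$) relative to $\log N$. Builder must then borrow essentially all blue vertices into $R$, and two problems arise. First, a red $K_{m-1}$ found in the augmented set $R'$ containing a borrowed vertex $w$ only gives $v\cup(K_{m-1}\setminus\{w\})$, which is again a red $K_{m-1}$, not a $K_m$: you have made no progress, and there is no reason the $(m-1,n)$-game on $R'$ can be steered to produce a red $K_{m-1}$ avoiding the borrowed vertices, since those vertices are indistinguishable inside $R'$. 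Second, even granting the rerouting for free, when $B$ is fully borrowed the discard set $D$ is empty, so the savings come entirely from the subgame and total at most $c(N-1)\log(N-1)+O(1)<cN\log N$; the induction with a fixed constant $c$ does not close, and you lose roughly $c\log N$ at such a level. You would need both a correct rerouting lemma and an amortised argument showing the cumulative loss over all $O(\log N)$ recursion levels stays $o(N\log N)$, and neither is supplied. I would not trust this outline to yield the theorem; if you want to salvage a recursion you must first explain why Painter cannot repeatedly push you into the ``red degree one short, blue side fully borrowed'' corner.
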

Since $\binom{N}{2}$ is the total number of edges in $K_N$, we say that Builder may always save $\Omega(N\log N)$ moves in the restricted $(n,n;N)$-online Ramsey game.

The rest of this paper is organized as follows. In the next section, we make some basic definitions and collect some useful results from extremal graph theory. After that, we divide the proof of Theorem~\ref{thm:main} into two sections. Section~\ref{sec:medbip} shows that if Builder builds a large complete multipartite graph, some pair of parts will have many edges of both colors between them. Section~\ref{sec:indpair} then shows that within these two parts of the graph, Builder can save many moves by constructing a large family of what we call {\it independent pairs}.

Finally in Section~\ref{sec:closing} we mention some open problems surrounding the restricted online Ramsey number.

\section{Preliminaries}
Henceforth, we let $N=r(n,n)$, and simply call the $(n,n;N)$-restricted online Ramsey game the $(n,n;N)$-game.

A bichromatic graph is a graph whose edges are colored red or blue.
\begin{definition}
A \textit{bichromatic graph} is a triple of sets $G=(V,R,B)$ where $R,B\subseteq {V\choose 2}$ and $R\cap B=\emptyset$. We say that $V$ is the vertex set and $R$ (resp. $B$) is the set of red (resp. blue) edges of $G$.
\end{definition}

If $G$ is a bichromatic graph, write $d_R (G) = \frac{|R|}{|R+B|}$ for the density of red edges (out of all the edges) in $G$. We say that a bichromatic graph is {\it $\varepsilon$-color-balanced} if $\varepsilon \le d_R(G) \le 1-\varepsilon$. Define induced bichromatic subgraphs in the obvious way.

The \textit{underlying graph} of a bichromatic graph $G=(V,R,B)$ is the (uncolored) graph $(V,R\cup B)$ and we say an (uncolored) graph is contained in $G$ if it is a subset of the underlying graph of $G$.

A bichromatic graph is complete if its underlying graph is complete.

We will think of bichromatic graphs as intermediate states in the $(n,n;N)$-game, and show that if Builder can reach certain bichromatic bipartite graphs $G$, Builder will be able to save many moves from those $G$.

\begin{definition}
If $G$ is a bichromatic graph on $N$ vertices, define $s(G)$ to be the largest $s\in \mathbb{N}$ for which Builder can win the $(n,n;N)$-game starting from $G$ using $\binom{N}{2}-e(G)-s$ moves. 
\end{definition}

We can now restate our Theorem~\ref{thm:main} in terms of $G$.
\begin{theorem}\label{thm:nlogn}
If $G$ is the empty bichromatic graph on $N$ vertices, 
$$s(G)=\Omega(N\log N).$$
\end{theorem}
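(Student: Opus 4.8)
Roughly, the plan is for Builder to proceed in two stages. In the first stage Builder builds a complete multipartite graph $H$ on all $N$ vertices, with parts $V_1,\dots,V_k$ whose number $k$ and (roughly equal) sizes are chosen as functions of $n$; recall $2^{n/2}\le N\le 2^{2n}$, so $\log N=\Theta(n)$ and the target saving $N\log N$ is of order $nN$. Building $H$ costs Builder exactly $\binom{N}{2}-\sum_i\binom{|V_i|}{2}$ moves and reveals Painter's color on every edge joining two distinct parts. The part sizes will be chosen large enough that $\sum_i\binom{|V_i|}{2}=\Omega(N\log N)$, so that already the edges of $K_N$ left unbuilt by $H$ account for the desired saving.

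Next I would invoke the main result of Section~\ref{sec:medbip}, which supplies the following dichotomy for the revealed coloring of $H$: either it already contains a monochromatic $K_n$ --- in which case Builder has won in at most $\binom{N}{2}-\sum_i\binom{|V_i|}{2}\le\binom{N}{2}-\Omega(N\log N)$ moves, and we are done --- or there is a pair of parts $X=V_i$, $Y=V_j$ for which the bipartite bichromatic graph between them is $\varepsilon$-color-balanced for an absolute constant $\varepsilon>0$, meaning that each color accounts for at least an $\varepsilon$-fraction of the $|X||Y|$ edges between $X$ and $Y$. Informally, if no such pair exists then every pair of parts is almost monochromatic, and with the right parameters a monochromatic $K_n$ inside $H$ becomes unavoidable; proving this is the job of Section~\ref{sec:medbip}. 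From now on assume we are in the second alternative.

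In the second stage Builder finishes $K_N$ by building the edges still missing, but \emph{omits as many of them as she can}. One must keep in mind here that $K_N$ with even a single edge deleted no longer arrows $(K_n,K_n)$ --- this is the classical fact that the size Ramsey number of $K_n$ equals $\binom{r(n,n)}{2}$ --- so Builder cannot commit in advance to a set of edges to skip; which edges are skippable has to be read off from the colors Painter has already revealed, and this is exactly what the color-balanced pair supplies. Exploiting the abundance of red and of blue edges between $X$ and $Y$, one extracts, via a counting argument inside this bipartite graph, a large family of the configurations that Section~\ref{sec:indpair} calls \emph{independent pairs}: each is a small gadget involving a pair of oppositely colored edges between $X$ and $Y$ that \emph{certifies} that some edge --- or a small bundle of edges --- of $K_N$ need never be built, in the sense that no matter how Painter colors Builder's remaining moves, the colors already present force a monochromatic $K_n$ among the built edges. ``Independent'' means the gadgets are chosen so that the edges they certify as skippable are distinct and the certificates do not interact, so that the savings simply add up. Arranging $\Omega(N\log N)$ skippable edges this way, Builder wins after at most
\[
\Big(\binom{N}{2}-\sum_i\binom{|V_i|}{2}\Big)+\Big(\sum_i\binom{|V_i|}{2}-\Omega(N\log N)\Big)=\binom{N}{2}-\Omega(N\log N)
\]
moves, so $s(G)=\Omega(N\log N)$ for the empty bichromatic graph $G$; this is Theorem~\ref{thm:nlogn}, and hence Theorem~\ref{thm:main}.

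The hard part will be the second stage: constructing the family of independent pairs and making the bookkeeping work out. Two points require care. First, one has to squeeze enough structure out of a two-coloring of the bipartite graph between $X$ and $Y$ that is only known to have density at least $\varepsilon$ in each color, in order to produce $\Omega(N\log N)$ certificates; this is where the number of parts $k$ and the part sizes must be balanced so that both the Section~\ref{sec:medbip} dichotomy holds and the count reaches the target, uniformly over $2^{n/2}\le N\le 2^{2n}$. Second, and more subtly, each certificate must be genuinely adaptive and robust --- it may reference only colors Painter has already shown, and it must survive any future coloring by Painter --- a constraint forced on us by the size Ramsey bound, since without exploiting Painter's revealed choices no saving at all is possible.
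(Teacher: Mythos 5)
Your overall architecture matches the paper's: build a complete multipartite graph, distinguish the case where every pair of parts is near-monochromatic from the case where some pair is $\varepsilon$-color-balanced, and harvest independent pairs in the latter case. But your characterization of the first branch of the dichotomy is wrong. You claim that if no color-balanced pair exists, then the multipartite graph $H$ ``already contains a monochromatic $K_n$'' and Builder has won. This cannot happen: the number of parts is a constant $C$ while $n\to\infty$, and any clique in a complete $C$-partite graph has at most $C$ vertices, so $H$ itself never contains a $K_n$ once $n>C$. What Section~\ref{sec:medbip} actually proves (Lemma~\ref{thm:multipartite}) is that if the $\varepsilon$-reduced graph is complete, Builder can win \emph{from that position while continuing to play}, saving $\Omega(N^2)$ moves: she first builds the remaining within-part pairs inside a near-monochromatic clique of parts, and the Erd\H os--Szemer\'edi theorem (Lemma~\ref{lem:bigclique}) then forces a monochromatic clique larger than $n$ in the resulting sparse-in-one-color complete bichromatic subgraph. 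Your sketch omits this step entirely, and the savings you attribute to ``already having a $K_n$'' do not exist without it.

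Two smaller inaccuracies are worth flagging. First, your description of how independent pairs yield savings is off: independence is a relation between two unbuilt within-part pairs (there is a red and a blue built edge between them), and the mechanism of Lemma~\ref{lem:pairwise} is that from two families $P,Q$ of unbuilt pairs with every $p\in P$ independent of every $q\in Q$, Builder may omit one entire family, giving a saving of $\min(|P|,|Q|)$ --- not a sum of per-gadget savings ``that simply add up,'' since pairwise independence within one family buys nothing. Second, extracting $\Omega(N\log N)$ such pairs from the color-balanced bipartite subgraph is not a routine density count: the paper passes through the vertex-pair incidence graph (Lemma~\ref{lem:leastdensity}) and then applies K\H ov\'ari--S\'os--Tur\'an (Lemmas~\ref{lem:Kovari} and~\ref{lem:computation}) to locate a $K_{a,b}$ with $a=\Theta(\log N)$ and $b=\Theta(N\log N)$, whose two sides supply $P$ and $Q$. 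You correctly identify this as the hard part, but the KST step is essential to reach the $N\log N$ threshold.
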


In other words, Builder can always save $\Omega(N\log N)$ edges in the $(n,n,N)$-game. In order to prove this theorem, we look to build structures from which Builder can always save moves.

For any bichromatic graph $G=(V,R,B)$, we call a pair $(u,v)\in V^2$ of non-adjacent vertices an {\it unbuilt pair}.

\begin{definition}
If $G = (V, R, B)$ is a bichromatic graph, two vertex-disjoint pairs $(u_1,u_2)$ and $(v_1,v_2)$ in $V^2$ are {\it independent} if both are unbuilt and there exist both a red edge $u_i v_j \in R$ and a blue edge $u_{i'} v_{j'} \in B$ for some (not necessarily distinct) $i,i',j,j'\in\{1,2\}$.
\end{definition}

The essential observation is that if two pairs $(u_1, u_2)$ and $(v_1, v_2)$ are independent in $G$, then the four vertices $u_1, u_2, v_1, v_2$ will never be in the same monochromatic clique. 

\begin{lemma}\label{lem:pairwise}
If $G$ is a bichromatic graph on $N$ vertices containing $s$ pairs $p_1,\ldots, p_s$ each independent to each of $t$ pairs $q_1,\ldots, q_t$, then $s(G)\ge \min(s,t)$.
\begin{proof}
Builder's strategy from this point forward is to build all the edges other than $p_1,\ldots, p_s$ and $q_1, \ldots, q_t$. Once this is done, let $G'$ be the resulting bichromatic graph. We claim that Builder only needs to build either $p_1,\ldots, p_s$ or $q_1, \ldots, q_t$ in $G'$ to win. Indeed, if all pairs $p_1,\ldots, p_s$ and $q_1, \ldots, q_t$ are built, the resulting graph is complete of size $N$ and must contain a monochromatic clique by Ramsey's theorem.

But independent pairs cannot lie in a monochromatic clique together. In particular, either building all the $p_i$ or all the $q_j$ alone will force a monochromatic clique already. The result follows.
\end{proof}
\end{lemma}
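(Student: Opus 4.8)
The plan is to describe an explicit Builder strategy starting from $G$ that wins in at most $\binom{N}{2}-e(G)-\min(s,t)$ moves, which is exactly what $s(G)\ge\min(s,t)$ asserts. The strategy has two phases. In the first phase Builder builds every edge of $K_N$ except the $s+t$ designated pairs $p_1,\dots,p_s,q_1,\dots,q_t$. These are genuinely distinct unbuilt pairs: each of them is unbuilt by the definition of independence, and independence forces each $p_i$ to be vertex-disjoint from each $q_j$, so no $p_i$ coincides with a $q_j$. Hence the first phase costs precisely $\binom{N}{2}-e(G)-(s+t)$ moves; call the resulting bichromatic graph $G'$. Since $G\subseteq G'$ with matching colors (the first phase only adds edges), the red and blue edges witnessing each of the $st$ independence relations are still present in $G'$, so every $p_i$ is still independent to every $q_j$ in $G'$.

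In the second phase Builder, now looking at $G'$ with all of Painter's colors revealed, commits either to building all of $p_1,\dots,p_s$ (at a cost of $s$ further moves) or to building all of $q_1,\dots,q_t$ (at a cost of $t$ further moves). I claim that at least one of these two options forces a monochromatic $K_n$ regardless of Painter's responses. Granting the claim, Builder selects a winning option, and her total move count is either $\binom{N}{2}-e(G)-t$ or $\binom{N}{2}-e(G)-s$, so in either case $s(G)\ge\min(s,t)$.

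It remains to prove the claim, which I would argue by contradiction. Suppose both options fail: there is a coloring $c_P$ of $\{p_1,\dots,p_s\}$ for which $H_P:=G'\cup c_P$ has no monochromatic $K_n$, and a coloring $c_Q$ of $\{q_1,\dots,q_t\}$ for which $H_Q:=G'\cup c_Q$ has no monochromatic $K_n$. Because the pair-sets $\{p_i\}$ and $\{q_j\}$ are edge-disjoint, $c_P$ and $c_Q$ can be combined with the colors of $G'$ into a single $2$-coloring of the complete graph $K_N$; as $N=r(n,n)$, Ramsey's theorem yields a monochromatic $K_n$ on some vertex set $M$. By the observation recorded just before the lemma, since each $p_i$ is independent to each $q_j$, $M$ cannot simultaneously contain both endpoints of some $p_i$ and both endpoints of some $q_j$ — one of the two edges witnessing that independence relation, which still carries its $G'$ color, would be the wrong color inside $M$. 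Therefore either $M$ uses no full $q_j$-edge, so all edges of $M$ lie in $H_P$, contradicting that $H_P$ has no monochromatic $K_n$; or $M$ uses no full $p_i$-edge, so all edges of $M$ lie in $H_Q$, a contradiction again. The one point requiring care is precisely this gluing step — verifying that the two hypothetical bad colorings, which a priori arise from different continuations of the game, really do assemble into a single coloring of $K_N$ to which Ramsey applies — and that is exactly where edge-disjointness of $\{p_i\}$ and $\{q_j\}$ is used. Everything else reduces to Ramsey's theorem together with the triviality that a monochromatic clique contains no edge of the opposite color.
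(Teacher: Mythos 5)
Your proof is correct and follows the same approach as the paper: build all edges except the $p_i$ and $q_j$, then argue that committing to one of the two families must win. You simply make explicit the gluing argument (combining the two hypothetical bad colorings of $\{p_i\}$ and $\{q_j\}$ into a single coloring of $K_N$ and invoking Ramsey) and the vertex-disjointness of the $p_i$'s from the $q_j$'s, both of which the paper's terser proof leaves implicit.
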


In general, if $G$ is a bichromatic graph on $N$ vertices containing unbuilt pairs $p_{j,1},\ldots, p_{j,s_j}$ for all $j=1,\cdots,t$ (so there are $\sum_{j=1}^ts_j$ of them), such that $p_{j_1,k}$ is independent to $p_{j_2,l}$ for any $j_1\neq j_2$, then $s(G)\ge s_1+s_2+\cdots+s_t-\max(s_1,s_2,\cdots,s_t)$. 

We then collect two old results in extremal graph theory that we will need. The first shows that a sparse graph contains a larger clique or independent set than Ramsey's theorem predicts.

\begin{lemma}\label{lem:bigclique} (Erd\H os and Szemer\'edi \cite{es}.)
There exists a universal constant $a>0$ such that if $G$ is a graph on $N$ vertices and $r\leq \varepsilon N^2$ edges, then either $G$ or its complement contains clique of size $s$ where 
\[
s > \frac{a\log N}{\varepsilon \log (\varepsilon^{-1})}.
\]
\end{lemma}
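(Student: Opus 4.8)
Since this is the classical Erd\H os--Szemer\'edi theorem, one option is to invoke it directly from \cite{es}; but if a self-contained argument is wanted the natural route is a sparsity-aware version of the greedy ``Ramsey tree'' selection, and I would proceed as follows. Run a process that simultaneously grows a candidate clique and a candidate independent set of $G$, maintaining a set $U$ of \emph{live} vertices: those adjacent to every vertex chosen so far for the clique and non-adjacent to every vertex chosen for the independent set. In a round one picks $v\in U$ and either appends it to the clique and replaces $U$ by $U\cap N(v)$, or appends it to the independent set and replaces $U$ by $U\setminus N(v)$. The classical analysis always takes the larger side and so halves $|U|$, running for $\tfrac12\log_2 N$ rounds; this already yields a clique or independent set of size $\Omega(\log N)$, which matches the target $\frac{a\log N}{\varepsilon\log(\varepsilon^{-1})}$ whenever $\varepsilon$ is bounded below by a constant. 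The hypothesis $e(G)\le\varepsilon N^2$ is used because $G[U]$ also has at most $\varepsilon N^2$ edges, so fewer than $2\varepsilon N^2/(\theta|U|)$ of its vertices have degree $\ge\theta|U|$ inside $U$; hence, while $|U|$ is not too small, there is a vertex of degree below $\theta|U|$, and the corresponding independent-set move discards only a $\theta$-fraction of the live set instead of half of it.

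The plan is to choose the threshold $\theta$ adaptively --- small while $|U|$ is large, increasing as $|U|$ shrinks toward $\sqrt\varepsilon N$ --- so that the process performs a long initial run of cheap independent-set moves before reverting, on a live set of size $\Omega(\sqrt\varepsilon N)$, to the classical halving. Every cheap round enlarges the independent set; the later halving rounds yield a further clique or independent set of size $\tfrac12\log_2 N - O(\log\varepsilon^{-1})$. One then reports whichever of the two structures is larger: for small $\varepsilon$ the long cheap run dominates and delivers $\Omega\!\big(\tfrac{\log N}{\varepsilon\log\varepsilon^{-1}}\big)$, while for $\varepsilon$ bounded below the halving tail alone suffices.

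The delicate point --- and the reason this is a theorem rather than an immediate corollary of Tur\'an's bound, which by itself only gives an independent set of size $\Omega(\varepsilon^{-1})$, or of classical Ramsey, which by itself only gives $\Omega(\log N)$ --- is the simultaneous control of $|U_i|$ and of the edge count $e(G[U_i])$ along the process: an independent-set move also destroys edges inside the live set, and it is exploiting this decay together with the right round-by-round choice of $\theta_i$ that keeps the economical regime alive for the full number of rounds. I expect getting the constant $a$ and the exact shape $\frac{\log N}{\varepsilon\log\varepsilon^{-1}}$ out of this bookkeeping to be the main obstacle, and for that one can simply follow \cite{es}.
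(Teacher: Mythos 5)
The paper gives no proof of this lemma: it is stated with the attribution ``(Erd\H{o}s and Szemer\'edi \cite{es})'' and invoked as a black box, so there is no internal argument for your proposal to match or diverge from. The only thing to assess is whether your sketch, taken on its own, would actually establish the stated bound, and here there is a real gap.

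Your outline --- the Ramsey tree with an adaptive degree threshold $\theta_i$ so that sparse rounds shrink the live set $U$ by only a $\theta_i$-fraction --- is the right starting picture, and you are right that Tur\'an alone gives only $\Omega(\varepsilon^{-1})$ while plain Ramsey gives only $\Omega(\log N)$. But the naive accounting you describe does not reach $\frac{a\log N}{\varepsilon\log(\varepsilon^{-1})}$. To keep cheap moves available you need $\theta_i \gtrsim \varepsilon N^2/|U_i|^2 \ge \varepsilon$, and to keep $|U|$ above $\sqrt{\varepsilon}N$ you need $\sum_i \theta_i \lesssim \log(\varepsilon^{-1})$; together these cap the number of cheap rounds at $O\!\left(\frac{\log(\varepsilon^{-1})}{\varepsilon}\right)$, while the halving tail contributes only $\tfrac12\log_2 N - O(\log \varepsilon^{-1})$. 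Reporting the larger of the two then gives $\max\!\left\{\Theta\!\left(\frac{\log(\varepsilon^{-1})}{\varepsilon}\right), \Theta(\log N)\right\}$, which is strictly smaller than the target $\frac{\log N}{\varepsilon\log(\varepsilon^{-1})}$ whenever $\log(\varepsilon^{-1})$ sits between, say, a large constant and $o(\sqrt{\log N})$. So ``take whichever structure is larger'' does not close the argument. You correctly flag that the decay of $e(G[U_i])$ along the process must be exploited --- an independent-set move deletes not just vertices but edges, so the density of $G[U]$ need not rise as $|U|$ shrinks --- but this is precisely the substance of the Erd\H{o}s--Szemer\'edi theorem rather than a detail one can defer. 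As written, your proposal is a plausible heuristic plus a citation, which is exactly what the paper itself does (cite \cite{es} and move on), so for the purposes of this paper it is adequate; as a self-contained proof it is incomplete at the one step that actually matters.
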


The second result is a theorem of K\"ov\'ari, S\'os, Tur\'an \cite{Kovari}, answering the famous problem of Zarankiewicz. It shows that dense bipartite graphs contain large complete bipartite subgraphs.
\begin{lemma} (K\"ov\'ari, S\'os, Tur\'an \cite{Kovari}.)\label{lem:Kovari}
Suppose $m\ge s\ge 1$ and $n\ge t\ge 1$, $G = (U,V,E)$ is a bipartite graph for which $|U| = m$ and $|V|=n$, and $G$ contains no subgraph isomorphic to $K_{s,t}$. Then,
\[
|E| < (t-1)^{1/s}(m-s+1)n^{1-1/s}+(s-1)n.
\]
\end{lemma}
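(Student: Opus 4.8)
The plan is to recover the classical Kővári–Sós–Turán proof, which rests on a double count against the forbidden $K_{s,t}$. For $v \in V$ write $d(v) = |N(v)|$ (so $N(v) \subseteq U$). Count the pairs $(v, S)$ with $v \in V$, $S \subseteq N(v)$, $|S| = s$: organizing by $v$ gives $\sum_{v \in V}\binom{d(v)}{s}$ such pairs, while organizing by $S$ gives $\sum_{S \in \binom{U}{s}}|\{v \in V : S \subseteq N(v)\}|$, and the $K_{s,t}$-freeness says each inner set has size at most $t-1$. Hence
\[
\sum_{v \in V}\binom{d(v)}{s} \;\le\; (t-1)\binom{m}{s}.
\]
This is the entire combinatorial idea; everything else is bookkeeping to reach the stated numerical form.

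To that end, I would bound the left side below by convexity. Extending $x \mapsto \binom{x}{s}$ to $[0,\infty)$ by declaring it $0$ on $[0,s-1]$, one checks it is convex there — indeed strictly convex on $(s-1,\infty)$ for $s \ge 2$ — so Jensen's inequality gives $\sum_{v\in V}\binom{d(v)}{s} \ge n\binom{\bar{d}}{s}$ with $\bar{d} := |E|/n$. If $\bar{d} \le s-1$ then $|E| \le (s-1)n$ and we are done; otherwise $s-1 < \bar{d} \le m$ (the right inequality since $|E| \le mn$). Now I would apply the elementary bound $\binom{x}{s} \ge \binom{m}{s}\bigl(\frac{x-s+1}{m-s+1}\bigr)^{s}$ valid for $s-1 \le x \le m$, which holds factor by factor because $\frac{x-i}{m-i}\ge\frac{x-s+1}{m-s+1}$ rearranges to the obvious inequality $(s-1-i)(m-x)\ge 0$. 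Taking $x = \bar{d}$, combining with the double count, and cancelling $\binom{m}{s}$ yields $\bigl(\frac{\bar{d}-s+1}{m-s+1}\bigr)^{s} \le \frac{t-1}{n}$, hence $|E| = n\bar{d} \le (t-1)^{1/s}(m-s+1)n^{1-1/s} + (s-1)n$.

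It remains to make the inequality strict. For $s,t \ge 2$ this follows by examining equality: Jensen is an equality only if all $d(v)$ coincide (strict convexity is available since $\bar{d} > s-1$), and the binomial estimate only if $\bar{d} \in \{s-1,m\}$; but $\bar{d}=m$ means $G = K_{m,n}$, which contains $K_{s,t}$ as $m\ge s$ and $n\ge t$ — a contradiction — while $\bar{d}=s-1$ has already been handled. The boundary cases $s=1$ or $t=1$ are immediate directly. I expect the only place demanding any care is this analysis of the equality cases (together with the routine verification that the extended $\binom{x}{s}$ is convex); the double count that drives everything is a single short observation.
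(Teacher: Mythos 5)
The paper does not prove this lemma; it cites the Kővári–Sós–Turán theorem directly from the original 1954 paper (reference \cite{Kovari}) and uses it as a black box. So there is no in-paper argument to compare against. Your proof is the standard textbook argument for KST, and it is essentially correct: the double count $\sum_{v\in V}\binom{d(v)}{s}\le (t-1)\binom{m}{s}$ against size-$s$ subsets of $U$ is exactly the engine, the convex extension of $x\mapsto\binom{x}{s}$ together with Jensen gives $n\binom{\bar d}{s}$ on the left, and the factor-by-factor bound $\frac{x-i}{m-i}\ge\frac{x-s+1}{m-s+1}$ for $s-1\le x\le m$ (equivalent to $(s-1-i)(m-x)\ge 0$) cleanly produces the numerical form. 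The strictness discussion is the only place that deserves slightly more care: the real source of strictness for $s\ge 2$ is that once $\bar d>s-1$ you either have $\bar d<m$, in which case the $i=0$ factor in the binomial estimate is strictly larger, or $\bar d=m$ forces $G=K_{m,n}\supseteq K_{s,t}$, a contradiction; you do not actually need the Jensen equality analysis. (For the degenerate values $s=1$ or $t=1$ the stated strict inequality is a bit delicate and arguably an artifact of how the classical theorem is phrased, but none of that matters for the paper's application, which only uses the bound asymptotically.) In short: correct proof of a result the paper only cites.
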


We only need a straightforward consequence of the above result.

\begin{lemma}\label{lem:computation}
Suppose $G=(U,V,E)$ is a bipartite graph satisfying $|U|=N_0,|V|=\binom{N_0}{2}$ with at least $\delta|U||V|$ edges, and $N_0$ is sufficiently large. Then, $G$ contains a copy of $K_{a,b}$, where $a = \delta \log N_0$ and $b=N_0\log N_0$. 
\begin{proof}
Suppose otherwise. Using Lemma~\ref{lem:Kovari}, we can compute that for $N_0$ sufficiently large,
\begin{align*}
    |E|&<(N_0\log N_0-1)^{\frac{1}{\delta \log N_0}}(N_0-\delta \log N_0+1)\binom{N_0}{2}^{1-\frac{1}{\delta \log N_0}}+(\delta \log N_0-1)\binom{N_0}{2}\\
    &<(N_0\log N_0)^{\frac{1}{\delta \log N_0}}N_0(N_0^2)^{1-\frac{1}{\delta \log N_0}}+\delta \log N_0\cdot N_0^2\\
    &<2^{-\frac{1}{\delta}}(\log N_0)^{\frac{1}{\delta \log N_0}}N_0^3+o(\delta N_0^3)\\
    &<\delta N_0^3.
\end{align*}

This is a contradiction.
\end{proof}
\end{lemma}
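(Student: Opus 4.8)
The goal is to prove Lemma~\ref{lem:computation}: a bipartite graph $G=(U,V,E)$ with $|U|=N_0$, $|V|=\binom{N_0}{2}$, and $|E|\ge \delta|U||V|$ contains a copy of $K_{a,b}$ with $a=\delta\log N_0$ and $b=N_0\log N_0$. The natural approach is a direct application of the Kővári–Sós–Turán bound from Lemma~\ref{lem:Kovari}, used contrapositively: assume $G$ is $K_{a,b}$-free, plug in $s=a=\delta\log N_0$ and $t=b=N_0\log N_0$ together with $m=N_0$ and $n=\binom{N_0}{2}$, and derive the upper bound $|E|<(t-1)^{1/s}(m-s+1)n^{1-1/s}+(s-1)n$. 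Then the task is purely to show this quantity is less than $\delta N_0^3 \le \delta|U||V|$ when $N_0$ is large, giving a contradiction. (One should double-check the hypotheses $m\ge s$ and $n\ge t$: indeed $N_0 \ge \delta\log N_0$ and $\binom{N_0}{2}\ge N_0\log N_0$ for large $N_0$, so Lemma~\ref{lem:Kovari} applies, and also $|E|\ge \delta N_0^3/2$, which is what we contradict up to constants.)

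The key computational steps, in order, are as follows. First, bound the second (additive) term: $(s-1)n < \delta\log N_0 \cdot \binom{N_0}{2} < \delta N_0^2 \log N_0 = o(\delta N_0^3)$, so it is negligible. Second, handle the main term. Write $n=\binom{N_0}{2} < N_0^2/2$, so $n^{1-1/s} = n \cdot n^{-1/s}$ and $m-s+1 < N_0$; thus the main term is at most $N_0 \cdot (N_0^2/2) \cdot (t-1)^{1/s} n^{-1/s} = \tfrac12 N_0^3 \cdot \big((t-1)/n\big)^{1/s}$. Now $(t-1)/n = (N_0\log N_0 - 1)/\binom{N_0}{2} \approx 2\log N_0 / N_0$, so $\big((t-1)/n\big)^{1/s} = \big(2\log N_0/N_0\big)^{1/(\delta\log N_0)}$. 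Take logarithms: $\tfrac{1}{\delta\log N_0}\big(\log 2 + \log\log N_0 - \log N_0\big) = -\tfrac{1}{\delta} + o(1)$, so this factor tends to $2^{-1/\delta}$ (up to a $1+o(1)$ correction coming from the $\log\log N_0$ and $\log 2$ terms). Hence the main term is at most $\tfrac12 \cdot 2^{-1/\delta}(1+o(1)) N_0^3 < \delta N_0^3$ once $N_0$ is large enough, since $\tfrac12 2^{-1/\delta} < 1$ and in fact the bound we need is comparison against $\delta N_0^3 \ge$ a fixed fraction; combining with the negligible additive term completes the contradiction.

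The only genuinely delicate point — and the main obstacle — is tracking the lower-order factor $(\log N_0)^{1/(\delta\log N_0)}$ (equivalently the $\log\log N_0$ in the exponent above) and confirming it does not spoil the estimate: one must verify $(\log N_0)^{1/(\delta\log N_0)} \to 1$ as $N_0\to\infty$, which follows since its logarithm is $\tfrac{\log\log N_0}{\delta\log N_0}\to 0$. Everything else is routine asymptotic bookkeeping. I would also be slightly careful about whether to compare $|E|$ against $\delta N_0^3$ or against $\delta|U||V| = \delta N_0\binom{N_0}{2} \sim \tfrac{\delta}{2}N_0^3$; the cleaner route is to note the displayed chain shows the KST bound is $<\delta N_0^3$, while a $K_{a,b}$-free assumption forces $|E| < \delta N_0^3$ as well, but actually we want to contradict $|E|\ge \delta |U||V|$. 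Since $\delta|U||V| = \delta N_0 \binom{N_0}{2}< \delta N_0^3$, if we instead prove the sharper KST estimate $|E| < \delta |U||V|$ we get the contradiction directly; the computation above already yields this because the main term is $(1+o(1))2^{-1/\delta}\cdot\tfrac12 N_0^3$, and $2^{-1/\delta}\cdot\tfrac12 < \tfrac12 \le \delta\binom{N_0}{2}/\text{(that)}\cdots$ — more simply, for any fixed $\delta\in(0,1)$ we have $2^{-1/\delta}\cdot\tfrac12 N_0^3 < \delta \cdot \tfrac12 N_0^3(1+o(1)) \sim \delta|U||V|$ fails unless $2^{-1/\delta}<\delta$, which does hold for all $\delta\in(0,1)$ (check: $2^{-1/\delta} < \delta \iff -1/\delta < \log_2\delta \iff \log_2(1/\delta) < 1/\delta$, true since $\log_2 x < x$). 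So the argument closes cleanly, and the proof is essentially the four-line display already sketched in the statement, with the justification of the $o(1)$ terms supplied as above.
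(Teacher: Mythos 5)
Your proof is correct and follows the same route as the paper: apply the K\H ov\'ari--S\'os--Tur\'an bound with $s=\delta\log N_0$, $t=N_0\log N_0$, $m=N_0$, $n=\binom{N_0}{2}$, separate the additive term $(s-1)n=o(N_0^3)$, and show the main term collapses to $(1+o(1))\cdot 2^{-1/\delta}\cdot\tfrac12 N_0^3$ after noting $N_0^{-1/(\delta\log N_0)}=2^{-1/\delta}$ and $(\log N_0)^{1/(\delta\log N_0)}\to 1$. You are in fact slightly more careful than the paper's displayed chain: the paper's last line bounds $|E|<\delta N_0^3$, which contradicts the $|E|\ge\delta N_0^3$ used in the actual application but is not quite strong enough for the stated hypothesis $|E|\ge\delta|U||V|\sim\tfrac{\delta}{2}N_0^3$; you correctly retain the factor $\tfrac12$ from $\binom{N_0}{2}<N_0^2/2$ and check $2^{-1/\delta}<\delta$ for all $\delta\in(0,1)$, which closes that small gap cleanly.
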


Finally, for the sake of notational convenience in the following sections, we further make the following definitions. % should these be changed into definitions?

%stay here
Write $K_{(X \times Y)}$ for the complete multipartite graph with $X$ parts $U_1, \ldots, U_X$ each of size $Y$. If $G = (V,R,B)$ has underlying graph $K_{(X\times Y)}$, define the {\it $\varepsilon$-reduced graph} of $G$ to be the graph $G_{\varepsilon}$ whose vertices are the parts $U_i$ of $G$, and whose edges are defined as follows. If $U_i, U_j$ are distinct parts of $G$, then there is a red edge between them in $G_{\varepsilon}$ if $d_R(G[U_i \cup U_j]) > 1 - \varepsilon$, and a blue edge if instead $d_R(G[U_i \cup U_j]) < \varepsilon$.

\section{Constructing a color-balanced bipartite graph}\label{sec:medbip}

The first step of the Builder strategy is to construct a bichromatic graph $G$ with a large color-balanced bipartite subgraph.

The main lemma of this section is that if Builder starts by building a $K_{(X\times Y)}$ and the reduced graph $G_\varepsilon$ turns out to be complete, then Builder can save many moves in the $(n,n;N)$-game.
\begin{lemma}\label{thm:multipartite}
For all $n$, there exist universal constants $C, \varepsilon > 0$ such that if $G$ is a bichromatic graph on $N = R(n,n)$ vertices with induced subgraph $K_{(C \times N/C)}$ and the $\varepsilon$-reduced graph $G_{\varepsilon}$ of $G$ is complete, then
\[
s(G) \ge \Omega(N^2).
\]
\end{lemma}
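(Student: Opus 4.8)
The plan is to exploit the completeness of the reduced graph $G_\varepsilon$ together with Ramsey's theorem applied at the level of parts. Since $G_\varepsilon$ is a complete graph on $C$ vertices (the parts $U_1,\dots,U_C$) with a red/blue edge-coloring, if we choose $C = r(k,k)$ for a suitably large constant $k$ (to be fixed), Ramsey's theorem guarantees a monochromatic clique among the parts — say without loss of generality a red clique on parts $U_{i_1},\dots,U_{i_k}$. By definition of the $\varepsilon$-reduced graph, this means that between every pair of these $k$ parts, the bipartite graph $G[U_{i_a}\cup U_{i_b}]$ has red density exceeding $1-\varepsilon$; that is, at most an $\varepsilon$ fraction of the roughly $(N/C)^2$ edges between any two of these parts are blue.

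First I would restrict attention to these $k$ parts, each of size $Y = N/C = \Theta(N)$, and show that one can find many independent pairs using only blue edges inside this red-dense block. The idea: each part $U_{i_a}$ has $\binom{Y}{2} = \Theta(N^2)$ unbuilt pairs (the internal edges of a part are never built when Builder builds $K_{(C\times N/C)}$). Two internal unbuilt pairs $(u_1,u_2)\subseteq U_{i_a}$ and $(v_1,v_2)\subseteq U_{i_b}$ with $a \ne b$ become independent as soon as there is both a red and a blue edge among the four cross-edges $u_\alpha v_\beta$. Since the block is red-dense, red cross-edges are abundant; the only obstruction is finding a blue cross-edge. The total number of blue edges between the $k$ parts is at least — wait, it is at most $\varepsilon k^2 Y^2$, so blue edges are the scarce resource and must be used efficiently. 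The key maneuver is to set up a bipartite incidence structure as in Lemma~\ref{lem:computation}: on one side put the $Y$ vertices of $U_{i_1}$, on the other side put the $\binom{Y}{2}$ internal pairs of $U_{i_2}$, and connect a vertex $u\in U_{i_1}$ to a pair $(v_1,v_2)\subseteq U_{i_2}$ whenever $uv_1$ or $uv_2$ is blue. If sufficiently many cross-edges between $U_{i_1}$ and $U_{i_2}$ are blue — which we cannot guarantee directly, since the block is red-\emph{dense} — then Lemma~\ref{lem:computation} produces a large $K_{a,b}$, yielding a common vertex set $A\subseteq U_{i_1}$ of size $\Theta(\log Y)$ each of whose vertices sends a blue edge into $b = \Theta(N\log N)$ distinct internal pairs of $U_{i_2}$.

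The main obstacle, and where the argument must be more careful, is that a red-dense block need not contain \emph{any} blue edges at all between a given pair of parts — the $\varepsilon$ slack is a bound from above, not below. So the actual strategy must be: first build \emph{additional} internal edges within one or two of the parts to manufacture blue edges. Concretely, Builder should take one part, say $U_{i_1}$, and build a few edges inside it; if Painter colors $\Omega(Y)$ of them blue we get a blue structure to work with internally, and if Painter colors most of them red then $U_{i_1}$ contains a large red clique-like structure, which combined with the red-dense cross-edges to another part lets Builder threaten a red $K_n$ and thereby force Painter's hand. The cleanest route, I expect, is to instead observe that \emph{within} a single part Builder can afford to build a sparse auxiliary graph: by Lemma~\ref{lem:bigclique}, any graph on $Y$ vertices with $o(Y^2)$ edges has a monochromatic clique of size $\omega(1)$ in \emph{either} color, so Builder can find, cheaply, a pair of vertices inside $U_{i_1}$ joined by a blue edge, and symmetrically a red edge — and then the $\Theta(N^2)$ internal unbuilt pairs across different parts, paired against each other via these seed edges together with the dense red cross-structure, give $\Omega(N^2)$ independent pairs on each of two sides. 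Applying Lemma~\ref{lem:pairwise} (in its stated $\min(s,t)$ form) with $s,t = \Omega(N^2)$ then yields $s(G) = \Omega(N^2)$.

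I would organize the write-up as: (i) apply Ramsey to $G_\varepsilon$ to extract a monochromatic block of $k$ parts; (ii) inside that block, use the red-density bound to count cross-edges and set up the Zarankiewicz-type extraction of Lemma~\ref{lem:computation}; (iii) handle the manufacture of opposite-colored seed edges inside parts (the delicate step), splitting into the case where Painter cooperates and the case where Painter's stubbornness itself forces a clique; (iv) assemble $\Omega(N^2)$ mutually independent internal pairs on two sides and conclude via Lemma~\ref{lem:pairwise}. Throughout, the bookkeeping of which edges have been "spent" versus "saved" must be tracked so that the $\binom{N}{2} - e(G) - s$ accounting in the definition of $s(G)$ comes out with $s = \Omega(N^2)$; I expect step (iii) to be the crux, since everything else is a fairly mechanical combination of the two extremal lemmas already in hand.
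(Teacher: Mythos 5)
Your opening move is right: apply Ramsey (equivalently Erd\H os--Szekeres) to the complete graph $G_\varepsilon$ on $C$ parts to extract a monochromatic block of $k$ parts. But from there you take the wrong fork. You try to manufacture independent pairs inside the monochromatic block — the tool that the paper reserves for the \emph{complementary} case, when some pair of parts is $\varepsilon$-color-balanced. As you yourself notice, inside a red-dense block there may be no blue cross-edges at all, so the independent-pair machinery (Lemma~\ref{lem:computation}, Lemma~\ref{lem:pairwise}) has nothing to grab onto, and your step (iii), ``manufacturing opposite-colored seed edges,'' is exactly where the plan stalls. The fix you sketch — using Lemma~\ref{lem:bigclique} to find a single blue edge inside a part, then somehow leveraging that together with red-dense cross-edges to produce $\Omega(N^2)$ mutually independent pairs — is not worked out, and I don't see how it could be: a single blue seed edge plus a red-dense block is far from producing two families of $\Omega(N^2)$ unbuilt pairs all mutually independent, which is what Lemma~\ref{lem:pairwise} would require.

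The actual argument is simpler and does not use independent pairs at all. Once you have a blue block $H=G[U_1\cup\cdots\cup U_t]$ (red density $<\varepsilon$ between every pair of parts, so $d_R(H)<\varepsilon$), Builder simply builds the remaining unbuilt pairs inside $H$ — these are only the intra-part pairs, $t\binom{N/C}{2}$ of them, which is less than an $\varepsilon$ fraction of $\binom{|V(H)|}{2}$ for $C$ large. Hence the completed graph $H'$ satisfies $d_R(H')<2\varepsilon$ no matter how Painter colors the new edges. Now apply Lemma~\ref{lem:bigclique} to the \emph{whole} graph $H'$, not to find a single seed edge but to find a monochromatic clique of size $\Omega\bigl(\tfrac{\log N}{\varepsilon\log(1/\varepsilon)}\bigr)$. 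Since $n=\Theta(\log N)$ (as $N=r(n,n)$), taking $\varepsilon$ small and $C$ large makes this clique size exceed $n$, so Builder has already won. The savings are the $(C-t)\binom{N/C}{2}=\Omega(N^2)$ unbuilt intra-part pairs in the untouched parts. In short: the color-imbalance of the monochromatic block is an asset to be exploited via Erd\H os--Szemer\'edi directly, not an obstacle to be worked around by hunting for the rare opposite-colored edges.
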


\begin{proof}
Suppose we have a bichromatic graph $G=(V,R,B)$ with a complete $\varepsilon$-reduced graph $G_{\varepsilon} = (V',R',B')$. We can apply the Erd\H os-Szekeres upper bound for $r(n,n)$ to see that 
\[
|V'|=C>r(1/2\log_2(C),1/2\log_2(C)).
\]

Therefore, there is a monochromatic clique of size $t = 1/2\log_2(C)$ in $G_{\varepsilon}$. Without loss of generality, let it be blue, and let its vertices be $U_1, \ldots, U_t$.

The definition of the reduced graph tells us that $H = G[U_1\cup \ldots \cup U_t]$ is complete multipartite with $t$ parts and between each pair of distinct parts at most an $\varepsilon$ fraction of the edges are blue. Therefore overall, $d_R(H) < \varepsilon$. 

It suffices to show that Builder can always guarantee a monochromatic clique of size $n$ by building all the unbuilt pairs in $H$, since then Builder wins with $\Omega(N^2)$ pairs unbuilt in the rest of $G$.

First, note that the only pairs unbuilt in $H$ are the pairs within individual parts, of which there are $t\binom{N/C}{2}$. For $C$ sufficiently large, this is less than an $\varepsilon$ fraction of all the pairs of vertices in $H$. In particular, since $d_R(H) < \varepsilon$, if Builder builds the remaining edges within $H$, the resulting complete bichromatic graph $H'$ satisfies $d_R(H') < 2 \varepsilon$ regardless of Painter's choices. From Lemma~\ref{lem:bigclique} it directly follows that, regardless of Painter's choices, $H'$ will contain a monochromatic clique of size at least
\[
u = \Omega \Big( \frac{\log (N)}{\varepsilon \log (\varepsilon^{-1})}\Big).
\]

Since $n = \Theta( \log N)$, taking $\varepsilon$ small enough and $C$ large enough in terms of $\varepsilon$, we can make $u > n$. 

In total, we have exhibited a strategy which guarantees a monochromatic $n$-clique if we begin with  a complete multipartite graph $K_{(C\times N/C)}$ with some bipartite part not $\varepsilon$-balanced. Given this strating point, we can then fill in the unbuilt pairs in $O(\log C)$ of the parts. Within the remaining $(1-o(1))C$ parts Builder thus saves $\Omega(N^2)$ moves, completing the proof.
\end{proof}

Lemma~\ref{thm:multipartite} handles the case when $G_\varepsilon$ is complete. The remaining case is that for some pair of parts $U_i, U_j$, the induced subgraph $G[U_i \cup U_j]$ is color-balanced. We handle this case in the next section. 

\section{Independent pairs}\label{sec:indpair}

By Lemma~\ref{thm:multipartite}, it will suffice to solve the problem for color-balanced bipartite graphs $G$. To do so we show that such graphs contain many independent pairs.

\begin{definition}
Let $G = (V, R, B)$ be a bipartite bichromatic graph with bipartition $V = V_1 \sqcup V_2$. The \textit{left vertex-pair incidence graph} of $G$ is the bipartite graph $H_L = (V', E')$ with vertex set $V' = V_1 \sqcup \binom{V_2}{2}$ and $(u,(v_1,v_2))\in E'$ if and only if $(u,v_1)$ and $(u,v_2)$ are two edges in $G$ of different colors.
\end{definition}

Observe that if $(u, (v_1, v_2))$ is an edge of the left vertex-pair incidence graph of $G$, then every unbuilt pair containing $u$ is independent from $(v_1, v_2)$.

Define the \textit{right vertex-pair incidence graph} $H_R$ of $G$ to be the left vertex-pair incidence graph of $G$ with the sides of its bipartition $V_1 \sqcup V_2$ swapped.

\begin{lemma}\label{lem:leastdensity}
For all $\varepsilon >0$, there exists $\delta>0$ depending only on $\varepsilon$ such that if $G$ has underlying graph $K_{N_0, N_0}$ for sufficiently large $N_0$ and $G$ is $\varepsilon$-color-balanced, then at least one of its vertex-pair incidence graphs $H_L$ and $H_R$ has at least $\delta N_0^3$ edges.
\begin{proof}
Suppose  $G$ has vertex bipartition $V=V_1 \sqcup V_2$. Let $\delta=\frac{\varepsilon^5}{2(1+\varepsilon)}$, $\mu=\varepsilon^2$, and $\nu=\frac{\varepsilon}{2(1+\varepsilon)}$. 

Define a vertex in $V_1$ to be color-balanced if it has at least $\mu N_0$ neighbors of each color. Then, for each color-balanced vertex in $V_1$, its corresponding vertex in $H_L$ has degree at least $(\mu N_0)^2=\mu^2 N_0^2$. If there are at least $\nu N_0$ color-balanced vertices in $V_1$, then there are at least $\nu N_0\times \mu^2 N_0^2=\nu\mu^2 N_0^3$ edges built in $E_L$, which means $H_L$ has at least $\delta N_0^3$ edges, and we would be done. 

It remains to consider the case in which there are fewer than $\nu N_0$ color-balanced vertices in $V_1$. 

The vertices of $V_1$ which are not color-balanced must have more than $(1-\mu) N_0$ neighbors of a single color. Let $S_R$ be the set of vertices with at least $(1-\mu) N_0$ red neighbors, and $S_B$ be the set of vertices with this many blue neighbors. We know $|S_R| + |S_B|>(1-\nu)N_0$. 

Since $G$ is $\varepsilon$-color-balanced, $d_R (G) \le 1-\varepsilon$. Also, by counting red edges from $S_R$ alone, $d_R(G) \geq |S_R|\cdot (1-\mu) / N_0$, so it follows that 
\[
|S_R|\le\frac{(1-\varepsilon)N_0}{1-\mu}=\frac{N_0}{1+\varepsilon}.
\]

This implies $|S_B|>(1-\nu)N_0-\frac{N_0}{1+\varepsilon}=\frac{\varepsilon N_0}{2(1+\varepsilon)}$. Likewise, we can show that $|S_R|>\frac{\varepsilon N_0}{2(1+\varepsilon)}$.

For each pair $(u_1, u_2) \in S_R \times S_B$, there must be at least $(1-2\mu)N_0$ vertices $v$ for which $(u_1, v)$ is red and $(u_2, v)$ is blue. Each triple $(u_1, u_2, v)$ gives an edge in the right vertex-pair incidence graph $H_R$. We find that the total number of edges in $H_R$ must be at least
\[
|S_R| \cdot |S_B| \cdot (1-2\mu) N_0 \ge \delta N_0^3.
\]

Thus, either $H_L$ or $H_R$ has at least $\delta N_0^3$ edges, as desired.
\end{proof}
\end{lemma}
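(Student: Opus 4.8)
The plan is to argue directly with the red-degree sequences of $G$ on the two sides of the bipartition $V=V_1\sqcup V_2$. Write $r_u$ for the number of red neighbors of a vertex $u\in V_1$ and $\rho_v$ for the number of red neighbors of $v\in V_2$. A vertex $u$ is adjacent in $H_L$ to exactly those pairs $\{v_1,v_2\}\subseteq V_2$ for which one of $v_1,v_2$ is a red neighbor of $u$ and the other a blue neighbor, so $\deg_{H_L}(u)=r_u(N_0-r_u)$ and therefore $e(H_L)=\sum_{u\in V_1}r_u(N_0-r_u)$; symmetrically $e(H_R)=\sum_{v\in V_2}\rho_v(N_0-\rho_v)$. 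I would then assume for contradiction that both $e(H_L)<\delta N_0^3$ and $e(H_R)<\delta N_0^3$, where $\delta=\delta(\varepsilon)$ is to be fixed at the very end, and use the color-balance hypothesis to rule this out.

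First I exploit $e(H_L)<\delta N_0^3$. From the elementary bound $\min(r_u,N_0-r_u)\le\tfrac{2}{N_0}r_u(N_0-r_u)$ we get $\sum_{u\in V_1}\min(r_u,N_0-r_u)<2\delta N_0^2$: up to a negligible number of ``wrong-colored'' edges, every vertex of $V_1$ sees essentially one color. Partition $V_1=A\sqcup B$ with $A=\{u:r_u\ge N_0/2\}$. Then the number of blue edges incident to $A$ plus the number of red edges incident to $B$ is at most $2\delta N_0^2$, so $\bigl||R|-|A|N_0\bigr|\le 2\delta N_0^2$, i.e. $\bigl|d_R(G)-|A|/N_0\bigr|\le 2\delta$. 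Since $G$ is $\varepsilon$-color-balanced and $\delta$ will be small, this forces $|A|\in[\tfrac{\varepsilon}{2}N_0,\,(1-\tfrac{\varepsilon}{2})N_0]$; in particular both $A$ and $B$ have linear size.

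Now I feed this back into $H_R$. For $v\in V_2$ let $x_v$ be the number of $u\in A$ with $uv$ blue and $y_v$ the number of $u\in B$ with $uv$ red, so that $\rho_v=|A|-x_v+y_v$ and hence $\bigl|\rho_v-|A|\bigr|\le x_v+y_v$. Since $\sum_v x_v$ is the total number of blue edges meeting $A$ and $\sum_v y_v$ the total number of red edges meeting $B$, we have $\sum_v(x_v+y_v)\le 4\delta N_0^2$, so by Markov's inequality all but at most $\tfrac{32\delta}{\varepsilon}N_0$ vertices $v$ satisfy $x_v+y_v\le\tfrac{\varepsilon}{8}N_0$. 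For each such $v$ we obtain $\rho_v\in[\tfrac{3\varepsilon}{8}N_0,(1-\tfrac{3\varepsilon}{8})N_0]$ and therefore $\rho_v(N_0-\rho_v)\ge\tfrac{9\varepsilon^2}{64}N_0^2$. Summing over these vertices (there are at least $N_0/2$ of them once $\delta$ is small) gives $e(H_R)\ge\tfrac{9\varepsilon^2}{128}N_0^3$, contradicting $e(H_R)<\delta N_0^3$ provided $\delta$ is chosen below both $\tfrac{9\varepsilon^2}{128}$ and a small absolute multiple of $\varepsilon$. Thus one can take $\delta$ to be an absolute constant times $\varepsilon^2$.

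The only delicate point is the bookkeeping: each averaging step discards an $O(\delta)$ or $O(\delta/\varepsilon)$ fraction of vertices, and one must check both that the union of all discarded sets stays a small fraction of $N_0$ and, more importantly, that these accumulated losses never erode the $\varepsilon$-margins produced by the color-balance hypothesis. Deferring the choice of $\delta$ as a sufficiently small function of $\varepsilon$ to the very end handles this uniformly; there is no genuine obstacle beyond this, and $N_0$ merely needs to be large enough that the additive $O(1)$ terms in the Markov estimates are absorbed.
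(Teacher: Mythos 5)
Your proof is correct, and the overall shape is the same as the paper's — a dichotomy between ``many vertices of $V_1$ see both colors in bulk'' (which makes $H_L$ dense) and ``almost all of $V_1$ is nearly one-colored'' (which forces $H_R$ to be dense) — but the execution is genuinely different. The paper fixes explicit thresholds $\mu=\varepsilon^2$ and $\nu=\frac{\varepsilon}{2(1+\varepsilon)}$, defines a vertex of $V_1$ to be color-balanced if it has $\ge \mu N_0$ neighbors of each color, and in the unbalanced case produces two large sets $S_R,S_B\subset V_1$ and counts edges of $H_R$ via triples $(u_1,u_2,v)\in S_R\times S_B\times V_2$; this yields $\delta=\Theta(\varepsilon^5)$. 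You instead use the exact identity $e(H_L)=\sum_u r_u(N_0-r_u)$, partition $V_1$ by majority color, and push the information to $V_2$ with Markov's inequality to show most degrees $\rho_v$ are $\Theta(\varepsilon N_0)$ away from both $0$ and $N_0$, from which $e(H_R)$ is bounded below directly. That gives $\delta=\Theta(\varepsilon^2)$, a strictly better dependence than the paper's, and the argument is arguably cleaner. The improvement is inconsequential downstream (the application only needs some $\delta(\varepsilon)>0$), but it is a real tightening. Two minor bookkeeping notes: the bound $\sum_v(x_v+y_v)\le 4\delta N_0^2$ can be sharpened to $2\delta N_0^2$ with no change in the argument, and your parenthetical claim that at least $N_0/2$ vertices of $V_2$ survive the Markov step requires $32\delta/\varepsilon\le 1/2$, i.e.\ $\delta\le\varepsilon/64$; this is compatible with your final choice $\delta=c\varepsilon^2$ once $c$ is small, since $\varepsilon\le 1/2$ always holds for an $\varepsilon$-color-balanced graph.
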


We can now complete the proof of Theorem~\ref{thm:nlogn}. 

\noindent {\it Proof of Theorem~\ref{thm:nlogn}.} Builder's strategy is to first construct a complete multipartite graph $G=K_{(C\times N_0)}$, where $N_0 = N/C$. By Lemma~\ref{thm:multipartite}, if the $\varepsilon$-reduced graph of $G$ is complete, then $s(G) > \Omega(N^2) > \Omega(N\log N)$, as desired.

Otherwise, we can find an induced subgraph $H=G[U_i \cup U_j]$ on two of the parts of $G$ which is $\varepsilon$-color-balanced. By Lemma~\ref{lem:leastdensity}, it follows that (without loss of generality) the left vertex-pair incidence graph $H_L$ of $H$ has  at least $\delta N_0^3=\delta(\varepsilon)N_0^3$ edges. Then, by Lemma \ref{lem:computation}, $H_L$ has an induced subgraph $H^*$ isomorphic to $K_{a,b}$, where $a=\delta \log N_0$ and $b=N_0\log N_0$.

Let $P$ be the set of all pairs $(u,u')$ where $u$ is one of the $a$ vertices on the left side of $H^*$ and $u'$ is a vertex on the left side of $H$. We have that $|P| \ge \delta N_0 \log N_0$. Let $Q$ be the set of all pairs $(v_i, v_j)\in H^2$ represented by vertices on the right side of $H^*$, so that $|Q| = N_0 \log N_0$. Since $(u, v_i, v_j)$ is an edge of the vertex-pair incidence graph for every such $u, u', v_i ,v_j$, it follows that every $p \in P$ is independent from every $q\in Q$.

By Lemma~\ref{lem:pairwise}, $s(G) \ge \min(|P|, |Q|) \ge \Omega(N\log N)$, as desired.
\hfill \qed

\section{Closing Remarks}\label{sec:closing}

The off-diagonal case of the restricted online Ramsey game is equally interesting, and we believe even larger savings can be made here.

\begin{conj}\label{conj:off-diagonal}
There exists an absolute constant $c$ such that if $N=r(3,n)$, then
\[
\tilde{r}(3,n;N) \le (1-c)\binom{N}{2}.
\]
\end{conj}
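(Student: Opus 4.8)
\medskip
\noindent\emph{Proof approach for Conjecture~\ref{conj:off-diagonal}.}

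\textbf{Reduction to a sparse-red Painter.} The first move is to show that we may assume Painter keeps the red graph triangle-free with maximum degree less than $n$. Indeed, if at some point a vertex $v$ acquires $n$ red neighbours $u_1,\dots,u_n$, Builder simply builds whichever of the $\binom n2$ edges among $u_1,\dots,u_n$ are still unbuilt; Painter must colour each of these blue, as a red one completes a red triangle through $v$, so Builder obtains a blue $K_n$ after at most $\binom n2$ further moves. Thus Builder's overall strategy is: run the main strategy described below, but if a vertex ever reaches red degree $n$ (or a red triangle ever appears) switch to this cheap finish. Provided the main strategy skips edges as it proceeds, this fallback costs only $o(N^2)$ extra moves, so it suffices to win the game with $\Omega(N^2)$ edges skipped against a Painter who never triggers the fallback. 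Under this assumption the red graph always has at most $N(n-1)/2 = o(N^2)$ edges (recall $N=r(3,n)=\Theta(n^2/\log n)$), so Builder is essentially playing to force a blue $K_n$ against a Painter whose red edges form an arbitrary sparse triangle-free graph.

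\textbf{Target configuration.} Builder wants to reach, on the full vertex set, a built graph $G$ with $e(G)\le(1-c)\binom N2$ whose blue part contains a $K_n$. Writing $H$ for the set of skipped edges, this is the requirement that $e(H)=\Omega(N^2)$ and that for every triangle-free graph $R$ edge-disjoint from $H$ with $\Delta(R)<n$, the graph $H\cup R$ has an independent set of size $n$, so that no admissible red graph $R$ keeps the blue graph $K_n$-free. A \emph{fixed} choice of $H$ cannot work: by Chv\'atal's theorem on size Ramsey numbers, every graph arrowing $(K_3,K_n)$ has at least $\binom{r(3,n)}2=\binom N2$ edges, so $K_N\setminus H$ fails to arrow as soon as $H$ is nonempty. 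Hence the skipped edges must be selected \emph{online}, reacting to Painter's colours, and the entire content of the conjecture is packed into finding such an adaptive selection rule.

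\textbf{The plan.} Builder should skip an edge $uv$ only once the partial colouring certifies that $u$ and $v$ cannot both lie in the blue $K_n$ Builder will eventually force. Two mechanisms are available. The first is the independent-pairs bookkeeping of Section~\ref{sec:indpair}: whenever, while building edges out of vertices, Builder creates pairs that are independent in the sense of Lemma~\ref{lem:pairwise}, the corresponding unbuilt pairs may be left unbuilt. The second, which should be far more effective in the sparse-red regime, exploits the fact that each red neighbourhood is a blue clique of size $<n$ and hence very rigid: once Builder has pinned down a large blue clique $S$, every vertex joined to $S$ by a red edge becomes useless for extending $S$, and Builder may skip all edges inside this useless set. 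Iterating such certificates --- maintaining a shrinking ``core'' of still-relevant vertices alongside growing partial blue cliques --- the aim is to accumulate $\Omega(N^2)$ skipped edges while still forcing a blue $K_n$ inside the core. A complementary ingredient is a \emph{charging} argument: if Builder builds most of the graph, a surviving Painter must reveal $\Omega(N^2/n)$ red edges merely to block $K_n$ (apply Tur\'an's theorem to the complement of the blue graph), and one would like to pay for blocks of skipped edges using these forced red edges.

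\textbf{The main obstacle.} The difficulty, and the reason the statement is only conjectured, is that this last balance is delicate: Painter needs only a $\tfrac1{n-1}$-fraction of the edges coloured red to keep the blue graph $K_n$-free, and those red edges can be placed last, in whatever Tur\'an-type pattern best exploits the $\Omega(N^2)$ edges Builder has already decided to skip. Builder must therefore commit to the skipped set without knowing where the blocking red edges will go, and it is unclear that she can do so robustly. Neither of the paper's two methods reaches a constant fraction on its own: the independent-pairs method is ultimately capped by the K\H{o}v\'ari--S\'os--Tur\'an estimate (Lemma~\ref{lem:Kovari}) and so saves only $\Omega(N\log N)$ edges, while the multipartite reduction of Section~\ref{sec:medbip} forces Builder to fill in the within-part edges of almost every part --- unless the reduced graph $G_\varepsilon$ itself already contains a monochromatic clique, which Painter prevents here since $G_\varepsilon$ has only a constant number of vertices and $\Theta(\log N)=o(n)$ is too small to invoke Lemma~\ref{lem:bigclique} or Lemma~\ref{thm:multipartite}. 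Closing the gap will therefore require a genuinely new construction of the skipped set --- most plausibly a randomised or greedy one analysed through the charging idea above, or one built directly from the forced structure of the red neighbourhoods.
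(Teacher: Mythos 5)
This statement is a conjecture, and the paper does not prove it; it only offers a heuristic sketch (the ``forced edges'' strategy) and reduces Conjecture~\ref{conj:off-diagonal} to a further unproven claim, namely that either Builder quickly forces a blue $K_n$ or a constant fraction of the edges of $K_N$ become forced. Your write-up is likewise not a proof, and you say so explicitly --- which is the correct assessment. Substantively your discussion runs parallel to the paper's heuristic: the reduction to a Painter whose red graph is triangle-free with maximum degree below $n$ (at a cost of at most $\binom n2 = O(N\log N) = o(N^2)$ extra moves, since $N = r(3,n) = \Theta(n^2/\log n)$) is exactly the engine behind the paper's ``forced edge'' idea, since both rest on the fact that each red neighbourhood is a blue clique of size $<n$ and that any edge inside such a neighbourhood is certain to be painted blue. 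You add two observations the paper leaves implicit: the Chv\'atal--Harary/Erd\H os--Faudree--Rousseau--Schelp fact that $\hat r(K_3,K_n) = \binom{r(3,n)}2$ forces the skipped set to be chosen \emph{adaptively}, and a Tur\'an-type charging heuristic that Painter must commit $\Omega(N^2/n)$ red edges to block a blue $K_n$. These are sound and clarify why the problem is hard. The genuine gap --- which you correctly identify --- is the same one the paper leaves open: no argument is given that a constant fraction of edges can be certified skippable before Painter commits her blocking red edges, and neither the independent-pairs method (Section~\ref{sec:indpair}, capped at $\Omega(N\log N)$ by Lemma~\ref{lem:Kovari}) nor the multipartite reduction (Section~\ref{sec:medbip}) closes it. So there is nothing to fix in your reasoning; you have simply, and accurately, described an open problem rather than solved it.
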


Suppose Builder orders the vertices $v_1,\ldots, v_n$ arbitrarily and employs the following strategy. On step $i$, Builder builds all unbuilt pairs out of $v_i$ so far. However, during the course of the game, Builder will come across many edges $(v_i,v_j)$ with a common neighbor $v_k$ such that $(v_i,v_k)$ and $(v_j,v_k)$ are both red. In this case, if $(v_i,v_j)$ is built and colored red, then Builder obtains a red triangle and wins. 

We call such edges $(v_i, v_j)$ {\it forced edges}, edges that Painter will certainly paint blue, and Builder may skip building them until they can be used to fill in a complete blue $n$-clique with certainty. We conjecture that regardless of Painter's actions, either Builder will quickly obtain a blue $n$-clique, or else a constant fraction of the edges in $K_N$ will become forced. If true, this would prove Conjecture~\ref{conj:off-diagonal}.

We remark that if exponential improvements are made to the upper bounds on either the diagonal or off-diagonal restricted online Ramsey numbers, then such improvements would translate to exponential improvements on the unrestricted online Ramsey numbers as well. However, it seems unlikely that such improvements are even true.

Conlon, Fox, Grinshpun, and He \cite{xiaoyu} asked a somewhat different question about the restricted online Ramsey number. Fix $m,n \ge 3$ and letting $N$ vary, how does the quantity $\tilde{r}(m,n;N)$ change? In this paper we studied the diagonal case (where $m=n$) and let $N = r(m,n)$, the minimum value for which $\tilde{r}(m,n;N)$ is defined, while for $N$ sufficiently large, $\tilde{r}(m,n;N) = \tilde{r}(m,n)$. They conjectured that $\tilde{r}(m,n;N)$ decreases substantially as $N$ varies between these values. 
Even the simplest question, whether $\tilde{r}(m,n;N)>\tilde{r}(m,n)$ holds for any $N$, is unknown to us at this time.

\section{Acknowledgements}
We would like to thank George Schaeffer for organizing the Stanford Undergraduate Research in Mathematics program where this research took place, and Stanford University for the opportunity and funding to pursue this project. We are grateful to Jacob Fox for his valuable input and suggestions regarding the restricted online Ramsey game.


\begin{thebibliography}{9}

\bibitem[1]{xiaoyu} D. Conlon, J. Fox, A. Grinshpun, and X. He, Online Ramsey Numbers and the Subgraph Query Problem, preprint (2018), arXiv:1806.09726.
\bibitem[2]{Conlon} D. Conlon, On-line Ramsey numbers. \textit{SIAM J. Discr. Math.} \textbf{23} (2009), 1954--63.
\bibitem[3]{Erdos} P. Erd\H os, Some remarks on the theory of graphs, \textit{Bull. Amer. Math. Soc.} \textbf{53} (1947), 292--294.
\bibitem[4]{beck}  J. Beck, Achievement games and the probabilistic method, \textit{Combinatorics, Paul Erdős is Eighty} \textbf{1} (1993), 51--78.
\bibitem[5]{kr}  A. Kurek and A. Ruciński, Two variants of the size Ramsey number, \textit{Discuss. Math. Graph Theory} \textbf{25} (2005), 141--149.
\bibitem[6]{es} P. Erd\H os and E. Szemer\'edi, On a Ramsey type theorem, \textit{Period. Math. Hungar.} \textbf{2} (1972), 295--299.
\bibitem[7]{Kovari} T. Kóvari, V. Sós and P. Turán, On a problem of K. Zarankiewicz, 
\textit{Colloq. Math.} \textbf{3} (1954), 50--57. 
\bibitem[8]{ErSk} P. Erd\H os, and G. Szekeres, A combinatorial problem in geometry, \textit{Compos. Math.} \textbf{2} (1935), 463--470.
\end{thebibliography}
\end{document}